\newcommand{\tomemail}{\href{mailto:tom.bachmann@zoho.com}{tom.bachmann@zoho.com}}
\newtheorem{proposition}{Proposition}
\newtheorem{lemma}[proposition]{Lemma}
\newtheorem{theorem}[proposition]{Theorem}
\newtheorem*{conjecture*}{Conjecture}
\newtheorem*{theorem*}{Theorem}
\newtheorem*{corollary*}{Corollary}
\newtheorem*{proposition*}{Proposition}
\newtheorem*{lemma*}{Lemma}
\theoremstyle{definition}
\newtheorem{construction}[proposition]{Construction}
\newtheorem*{definition*}{Definition}
\newtheorem*{construction*}{Construction}
\theoremstyle{remark}
\newtheorem{remark}[proposition]{Remark}
\newtheorem*{remark*}{Remark}
\newtheorem{example}[proposition]{Example}
\newtheorem*{example*}{Example}
\newcommand{\Z}{\mathbb{Z}}
\let\scr=\mathcal
\let\bb=\mathbb
\newcommand{\Gm}{{\mathbb{G}_m}}
\def\P{\bb P}
\newcommand{\veff}{{\text{veff}}}
\newcommand{\SH}{\mathcal{SH}}
\let\lim=\relax
\DeclareMathOperator*{\lim}{lim}
\def\Map{\mathrm{Map}}
\def\PSh{\mathcal{P}}
\def\Spc{\mathcal{S}\mathrm{pc}{}}
\newcommand{\Spec}{\mathrm{Spec}}
\newcommand{\gp}{\mathrm{gp}}
\newcommand{\wequi}{\simeq}
\DeclareRobustCommand{\ul}{\underline}
\def\Nis{\mathrm{Nis}}
\def\mot{\mathrm{mot}}
\newcommand{\et}{{\acute{e}t}}
\newcommand{\fr}{\mathrm{fr}}
\newcommand{\KGL}{\mathrm{KGL}}
\newcommand{\KO}{\mathrm{KO}}
\newcommand{\Cor}{\mathrm{Cor}}
\newcommand{\Vect}{\mathrm{Vect}}
\newcommand{\Bil}{\mathrm{Bil}}
\newcommand{\Alt}{\mathrm{Alt}}
\newcommand{\cof}{\mathrm{cof}}
\newcommand{\W}{\mathrm{W}}
\numberwithin{proposition}{section}
\numberwithin{equation}{section}
\renewcommand{\todo}[1]{}
\newcommand{\NB}[1]{}
\newcommand{\NB}[1]{\todo[color=gray!40]{#1}}
\title{The very effective covers of $\KO$ and $\KGL$ over Dedekind schemes}
\date{\today}
\author{Tom Bachmann}
\address{Mathematisches Institut, LMU Munich, Munich, Germany}
\email{\tomemail}
\begin{document}

\maketitle

\begin{abstract}
We answer a question of Hoyois--Jelisiejew--Nardin--Yakerson regarding framed models of motivic connective $K$-theory spectra over Dedekind schemes.
That is, we show that the framed suspension spectrum of the presheaf of groupoids of vector bundles (respectively non-degenerate symmetric bilinear bundles) is the effective cover of $\KGL$ (respectively very effective cover of $\KO$).
One consequence is that, over any scheme, we obtain a spectral sequence from Spitzweck's motivic cohomology to homotopy algebraic $K$-theory; it is strongly convergent under mild assumptions.
\end{abstract}

\section{Statement of results}
Let $S$ be a scheme.
The category $\PSh_\Sigma(\Cor^\fr(S))$ of presheaves with framed transfers \cite[\S2.3]{EHKSY} is a motivic analog of the classical category of $\scr E_\infty$-monoids.
We have the \emph{framed suspension spectrum} functor \[ \Sigma^\infty_\fr: \PSh_\Sigma(\Cor^\fr(S)) \to \SH(S) \] which was constructed in \cite[Theorem 18]{hoyois2018localization}.
By analogy with the classical situation, one might expect that many interesting motivic spectra can be obtained as framed suspension spectra.
This is indeed the case; see \cite[\S1.1]{hoyois2021hermitian} for a summary.

This note concerns the following examples of the above idea.
One has framed presheaves \cite[\S6]{hoyois2021hermitian} \[ \Vect, \Bil \in \PSh_\Sigma(\Cor^\fr(S)) \] where $\Vect(X)$ is the groupoid of vector bundles on $X$ and $\Bil(X)$ is the groupoid of vector bundles with a non-degenerate symmetric bilinear form.
There exist Bott elements \[ \beta \in \pi_{2,1} \Sigma^\infty_\fr \Vect \quad\text{and}\quad \tilde\beta \in \pi_{8,4} \Sigma^\infty_\fr \Bil \] and canonical equivalences \cite[Proposition 5.1]{hoyois2020hilbert} \cite[Proposition 6.7]{hoyois2021hermitian} \[ (\Sigma^\infty_\fr \Vect)[\beta^{-1}] \wequi \KGL \quad\text{and}\quad (\Sigma^\infty_\fr \Bil)[\tilde\beta^{-1}] \wequi \KO. \]
Here $\KGL$ is the motivic spectrum representing homotopy algebraic $K$-theory and $\KO$ is the motivic spectrum representing homotopy hermitian $K$-theory.\footnote{As a notational convention for this introduction, whenever we mention $\KO$ we shall assume that $1/2 \in S$.}
Again by comparison with the classical situation, this suggests that $\Sigma^\infty_\fr \Vect$ and $\Sigma^\infty_\fr \Bil$ should be motivic analogs of \emph{connective} $K$-theory spectra.
Another way of producing ``connective'' versions is by passing to (very) effective covers \cite{voevodsky-slice-filtration,spitzweck2012motivic}.
It was proved in \cite{hoyois2021hermitian,hoyois2020hilbert} that these two notions of connective motivic $K$-theory spectra coincide, provided that $S$ is regular over a field.

Our main result is to extend this comparison to more general base schemes.
We denote by $H\Z$ Spitzweck's motivic cohomology spectrum \cite{spitzweck2012motivic} and by $H\W$ the periodic Witt cohomology spectrum \cite[Definition 4.6]{bachmann-etaZ}.
\begin{theorem} \label{thm:main}
Let $S$ be a scheme.
\begin{enumerate}
\item Suppose that $f_1(H\Z) = 0 \in \SH(S)$. The canonical map \[ \Sigma^\infty_\fr \Vect \to f_0 \KGL \in \SH(S) \] is an equivalence.
\item Suppose in addition that $1/2 \in S$ and $H\W_{\ge 2} = 0 \in \SH(S)$.
  The canonical map \[ \Sigma^\infty_\fr \Bil \to \tilde f_0 \KO \in \SH(S) \] is an equivalence.
\end{enumerate}
These assumptions are satisfied if $S$ is essentially smooth over a Dedekind scheme (containing $1/2$ in case (2)).
\end{theorem}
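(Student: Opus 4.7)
The canonical map $\phi:\Sigma^\infty_\fr \Vect\to f_0\KGL$ in part (1) exists because $\Sigma^\infty_\fr \Vect$ is effective (being a framed suspension spectrum) and the Bott equivalence $(\Sigma^\infty_\fr \Vect)[\beta^{-1}]\simeq\KGL$ provides a map $\Sigma^\infty_\fr \Vect\to\KGL$ which factors through the effective cover. To prove $\phi$ is an equivalence, I would study the cofiber $C:=\cof(\phi)$. Since $f_0\KGL[\beta^{-1}]\simeq\KGL$, we get $C[\beta^{-1}]=0$, so it suffices to show that $\beta$ acts invertibly on $C$. By the octahedral axiom, this reduces to showing that the induced map on cofibers of multiplication by $\beta$,
\[
\Sigma^\infty_\fr \Vect/\beta \longrightarrow f_0\KGL/\beta,
\]
is an equivalence; indeed, if its cofiber vanishes, then $\cof(\beta\colon\Sigma^{2,1}C\to C)=0$, whence $C\simeq C[\beta^{-1}]=0$.

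To identify the two cofibers: the right-hand side is the zeroth slice $s_0\KGL$, since $f_1\KGL\simeq\Sigma^{2,1}f_0\KGL$ by $\KGL$-periodicity, and under Spitzweck's identification $s_0\1\simeq H\Z$ together with the hypothesis $f_1(H\Z)=0$, one obtains $s_0\KGL\simeq H\Z$. For the left-hand side, the target identification $\Sigma^\infty_\fr \Vect/\beta\simeq H\Z$ is the nontrivial geometric input; it is known over fields from the literature recalled in the introduction, and I would propagate it to Dedekind bases by leveraging the hypothesis $f_1(H\Z)=0$, either via a continuity/base-change argument relating $S$ to its generic and closed points or by analyzing the slice tower of $\Sigma^\infty_\fr \Vect$ directly using the framed presheaf structure.

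For part (2) the argument is formally parallel, with $\tilde f_0\KO$, $\tilde\beta$, and $\Sigma^\infty_\fr \Bil$ replacing $f_0\KGL$, $\beta$, and $\Sigma^\infty_\fr \Vect$. The role of $H\Z$ in the slice analysis is now shared between $H\Z$ and $H\W$, and the auxiliary hypothesis $H\W_{\ge 2}=0$ controls the Witt-cohomology portion of the very effective slices of $\KO$ just as $f_1(H\Z)=0$ controlled the motivic-cohomology portion in part (1). To verify the stated hypotheses hold when $S$ is essentially smooth over a Dedekind scheme, one appeals to Spitzweck's slice calculation of $H\Z$ for $f_1(H\Z)=0$ and to \cite[Definition 4.6]{bachmann-etaZ} and the surrounding results for $H\W_{\ge 2}=0$.

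\textbf{Main obstacle.}
The hard step is the identification $\Sigma^\infty_\fr \Vect/\beta\simeq H\Z$ over a Dedekind base. Over fields this rests on the delicate framed-correspondence technology cited in the introduction; extending to Dedekind bases is not automatic because the slice filtration interacts poorly with general base change, and one must use $f_1(H\Z)=0$ precisely to bridge the gap. In part (2) the analogous difficulty is amplified, since $\Sigma^\infty_\fr \Bil/\tilde\beta$ mixes motivic and Witt cohomology and separating the two pieces requires the extra hypothesis $H\W_{\ge 2}=0$.
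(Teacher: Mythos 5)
Your reduction of part (1) to the statement that the induced map $\Sigma^\infty_\fr \Vect/\beta \to f_0\KGL/\beta \simeq s_0\KGL$ is an equivalence is formally correct (modulo a reduction to qcqs $S$, needed both for the exhaustiveness of the slice filtration on $\KGL$ and for $f_0$ to commute with the relevant filtered colimits). But the remaining step is where the entire difficulty lives, and your proposal does not contain a workable idea for it. Knowing that the source is $H\Z$ (which is true over \emph{any} base, by base change to $\Spec(\Z)$ and reduction to perfect fields --- this is the paper's Lemma \ref{lemm:Vect/beta}, and it does \emph{not} use the hypothesis $f_1(H\Z)=0$) is not enough: you must control the target $s_0\KGL$ over $S$ and show the specific map is an equivalence. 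Your two suggested routes both founder on the same point: the functors $f_0$ and $s_0$ do not commute with base change in any obvious way, so you cannot ``propagate'' an identification of $s_0\KGL$ from residue fields to $S$; and ``analyzing the slice tower of $\Sigma^\infty_\fr\Vect$ directly'' is not an outline of an argument. In short, computing $f_0\KGL/\beta$ is essentially equivalent to the theorem you are trying to prove, so the reduction is circular as it stands.

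The paper's proof inverts the logic so that $f_0\KGL$ and $s_0\KGL$ never need to be computed in advance. One writes $\KGL$ as the colimit of $\Sigma^\infty_\fr\Vect \xrightarrow{\beta} \Sigma^{-2,-1}\Sigma^\infty_\fr\Vect \xrightarrow{\beta} \cdots$ and applies $f_0$ to the whole diagram. The cofiber of the $n$-th transition map is $\Sigma^{-2n-2,-n-1}H\Z$ by Lemma \ref{lemm:Vect/beta}, and $f_0(\Sigma^{-2n-2,-n-1}H\Z) \simeq \Sigma^{-2n-2,-n-1}f_{n+1}(H\Z) = 0$ for $n\ge 0$ --- \emph{this} is where the hypothesis $f_1(H\Z)=0$ enters, not in establishing $\Sigma^\infty_\fr\Vect/\beta\simeq H\Z$. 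Hence every transition map is an $f_0$-equivalence, $f_0$ commutes with the colimit since $S$ is qcqs, and $f_0\KGL \simeq f_0\Sigma^\infty_\fr\Vect \simeq \Sigma^\infty_\fr\Vect$ because framed suspension spectra are effective. You should restructure your argument this way. Finally, your claim that part (2) is ``formally parallel'' substantially understates it: there the cofiber of $\tilde\beta$ (more precisely, of the two maps $\sigma_1,\sigma_2$ whose composite is $\tilde\beta$) is not a single cohomology spectrum but must be resolved by a finite filtration with layers $H\tilde\Z$, $H\Z/2$ (twice), $\ul{k}^M$ and $H\Z$, each of which must be shown to lie in $\SH(S)^{\veff\perp}$ after the appropriate negative twist; the $H\tilde\Z$ layer is where $H\W_{\ge 2}=0$ is used, via an $\eta$-inversion argument.
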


\begin{remark}
That the assumptions are satisfied for Dedekind schemes is proved in \cite[Proposition B.4]{bachmann-norms} for (1) and in \cite[Lemma 3.8]{bachmann-etaZ} for (2).
They in fact hold for all schemes; this will be recorded elsewhere.
\end{remark}

\begin{example}
Bott periodicity implies formally that $f_n \KGL \wequi \Sigma^{2n,n} f_0\KGL$ and $s_n(\KGL) \wequi \Sigma^{2n,n} f_0(\KGL)/\beta$.
Theorem \ref{thm:main}(1) implies that $f_0(\KGL)/\beta \wequi H\Z$ (see Lemma \ref{lemm:Vect/beta}).
Hence in this situation the slice filtration for $\KGL$ yields a convergent spectral sequence, with $E_2$-page given by (Spitzweck's) motivic cohomology.
\end{example}

\subsection*{Notation}
We use notation for standard motivic categories and spectra, as in \cite{bachmann-etaZ} and \cite{hoyois2021hermitian}.

\section{Proofs}
As a warm-up, we treat the case of $\KGL$.
Recall that the functor $\Sigma^\infty_\fr$ inverts group-completion.
The Bott element lifts to $\beta: (\P^1, \infty) \to \Vect^\gp$ \cite[\S5]{hoyois2020hilbert}.
We also have the rank map $\Vect^\gp \to \Z \in \PSh_\Sigma(\Cor^\fr(S))$.
The composite \[ (\P^1,\infty) \wedge \Vect^\gp \xrightarrow{\beta} \Vect^\gp \wedge \Vect^\gp \xrightarrow{m} \Vect^\gp \to \Z \] is null-homotopic after motivic localization, since $\Z$ is motivically local and truncated and $(\P^1,\infty) \stackrel{\mot}{\wequi} S^1 \wedge \Gm$.
\begin{lemma} \label{lemm:Vect/beta}
The induced map \[ (\Sigma^\infty_\fr \Vect)/\beta \to \Sigma^\infty_\fr \Z \wequi H\Z \] is an equivalence.
\end{lemma}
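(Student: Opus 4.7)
The plan is to exploit the rank augmentation of the ring $\Sigma^\infty_\fr \Vect$ to split off $H\Z$, thereby reducing the lemma to a ``connective Bott periodicity'' statement, and then to prove the latter using the projective bundle formula.

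First I would set up the rank splitting. The multiplicative section $\Z \to \Vect^\gp$ given by $n \mapsto \scr O^n$ yields, after applying $\Sigma^\infty_\fr$, a splitting of motivic spectra
\[
\Sigma^\infty_\fr \Vect \simeq H\Z \oplus J,
\]
where $J := \Sigma^\infty_\fr \widetilde\Vect$ is the framed suspension spectrum of the rank-zero (reduced) part, and the $H\Z$-summand is identified via the known equivalence $\Sigma^\infty_\fr \Z \simeq H\Z$. This realizes $\Sigma^\infty_\fr \Vect$ as an $H\Z$-augmented $E_\infty$-ring with augmentation ideal $J$, so in particular $\beta \in \pi_{2,1} J$.

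Next, I would trace through this splitting to identify the cofiber. Since $J$ is a two-sided ideal of the $E_\infty$-ring $\Sigma^\infty_\fr \Vect$, multiplication by $\beta$ takes all of $\Sigma^{2,1}\Sigma^\infty_\fr \Vect$ into $J$. A direct analysis of the splitting as spectra then gives
\[
(\Sigma^\infty_\fr \Vect)/\beta \simeq H\Z \oplus C, \qquad C := \operatorname{cof}\bigl(\Sigma^{2,1}\Sigma^\infty_\fr \Vect \xrightarrow{\beta\cdot} J\bigr),
\]
and the canonical map to $H\Z$ supplied in the preceding paragraph coincides with the projection onto the first summand. Thus it suffices to prove that $C = 0$, i.e., that Bott multiplication induces an equivalence $\Sigma^{2,1} \Sigma^\infty_\fr \Vect \simeq J$ in $\SH(S)$.

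The main obstacle is this last equivalence, which is essentially a connective Bott periodicity for $\Sigma^\infty_\fr \Vect$. I would approach it via the motivic projective bundle formula: the splitting $(\P^1)_+ \simeq \1 \oplus \Sigma^{2,1}\1$ together with the identification of $\beta$ as the tautological class on $\P^1$ should exhibit $\beta\cdot \colon \Sigma^{2,1}\Sigma^\infty_\fr \Vect \to \Sigma^\infty_\fr \Vect$ as the inclusion of the Bott summand in the decomposition of $\Sigma^\infty_\fr \Vect \wedge (\P^1)_+$, and restricting to the $J$-summand then yields the desired equivalence. Making this argument precise in the framed motivic setting over a general base is the substantive step; the rest of the proof is formal.
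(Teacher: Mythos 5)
There is a genuine gap --- in fact two. First, the splitting $\Sigma^\infty_\fr \Vect \simeq H\Z \oplus J$ does not exist. The assignment $n \mapsto \scr O^n$ is not a section of the rank map in $\PSh_\Sigma(\Cor^\fr(S))$: the framed transfer on $\Vect$ along a finite syntomic $f\colon Z \to X$ is the pushforward of vector bundles, and $f_*\scr O_Z$ is a rank-$\deg(f)$ bundle that is in general not trivial, whereas the transfer on the constant sheaf $\Z$ is multiplication by the degree. Nor can the splitting hold merely at the level of spectra: over a perfect field it would say that the cofiber sequence $f_1\KGL \to f_0\KGL \to s_0\KGL \simeq H\Z$ splits, hence (iterating via Bott periodicity) that $\KGL \simeq \bigoplus_n \Sigma^{2n,n}H\Z$, which is false integrally --- the slice (motivic) spectral sequence for $\KGL$ has nontrivial differentials, e.g.\ over $\R$ or $\Z$ at the prime $2$. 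So the decomposition $(\Sigma^\infty_\fr\Vect)/\beta \simeq H\Z \oplus C$ that your argument rests on is not available.

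Second, even setting that aside, your reduction replaces the lemma by the claim that $\beta\cdot\colon \Sigma^{2,1}\Sigma^\infty_\fr\Vect \to J$ is an equivalence, which is at least as strong as the statement being proved (it is essentially Theorem \ref{thm:main}(1) plus the splitting), and the proposed route via the projective bundle formula cannot supply it: the decomposition $E \wedge (\P^1)_+ \simeq E \oplus \Sigma^{2,1}E$ holds for \emph{every} motivic spectrum $E$, being induced by $(\P^1)_+ \simeq \1 \oplus \Sigma^{2,1}\1$, so it carries no information about the specific map $\beta$. The step you flag as ``substantive'' is precisely the whole content, and no mechanism for proving it is given. The paper's proof avoids all of this by a different strategy: all terms in the map are stable under base change, so one reduces to $S = \Spec(\Z)$, then by \cite[Proposition B.3]{bachmann-norms} to perfect fields, where $\Sigma^\infty_\fr\Vect \simeq f_0\KGL$ is known and the cofiber of $\beta$ is $s_0\KGL \simeq H\Z$. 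Some such reduction to a known base (or an equivalent input) is unavoidable here; the lemma is not a formal consequence of the ring structure on $\Sigma^\infty_\fr\Vect$.
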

\begin{proof}
The equivalence $\Sigma^\infty_\fr \Z \wequi H\Z$ is \cite[Theorem 21]{hoyois2018localization}.
Since all terms are stable under base change \cite[proof of Lemma 7.5]{hoyois2021hermitian} \cite[Lemma 16]{hoyois2018localization}, we may assume that $S = \Spec(\Z)$.
Using \cite[Proposition B.3]{bachmann-norms} we further reduce to the case where $S$ is the spectrum of a perfect field.
In this case $\Sigma^\infty_\fr \Vect \wequi f_0 \KGL$ and so $(\Sigma^\infty_\fr \Vect)/\beta \wequi s_0 \KGL \wequi H\Z$ (see e.g. \cite[Proposition 2.7]{ananyevskiy2017very}).
\end{proof}

\begin{proof}[Proof of Theorem \ref{thm:main}(1)]
Note first that if $U \subset S$ is an open subscheme, and any of the assumptions of Theorem \ref{thm:main} holds for $S$, it also holds for $U$.
On the other hand, if one of the conclusions holds for all $U$ in an open cover, it holds for $S$.
It follows that we may assume that $S$ is qcqs, e.g. affine.

Since $f_1(H\Z) = 0$ we find (using Lemma \ref{lemm:Vect/beta}) that \[ \beta: \Sigma^\infty_\fr \Vect \to \Sigma^{-2,-1} \Sigma^\infty_\fr \Vect \] induces an equivalence on $f_i$ for $i \ge 0$.
It follows that in the directed system \[ \Sigma^\infty_\fr \Vect \xrightarrow{\beta} \Sigma^{-2,-1} \Sigma^\infty_\fr \Vect \xrightarrow{\beta} \Sigma^{-4,-2} \Sigma^\infty_\fr \Vect \xrightarrow{\beta} \dots \] all maps induce an equivalence on $f_0$.
Since the colimit is $\KGL$, $f_0$ commutes with colimits (here we use that $X$ is qcqs, via \cite[Proposition A.3(2)]{bachmann-norms}) and $\Sigma^\infty_\fr \Vect$ is effective (like any framed suspension spectrum), the result follows.
\end{proof}

The proof for $\KO$ is an elaboration on these ideas.
From now on we assume that $1/2 \in S$.
Recall from \cite[Definition 2.6, Lemma 2.7]{bachmann-etaZ} the motivic spectrum \[ \ul{k}^M \wequi (H\Z/2)/\tau \in \SH(S). \]
For the time being, assume $S$ is Dedekind.
Taking framed loops we obtain \[ \ul{k}_1^M := \Omega^\infty_\fr \Sigma^{1,1} \ul{k}^M \in \PSh_\Sigma(\Cor^\fr(S)). \]
\begin{lemma} \label{lemm:k1M}
Let $S$ be a Dedekind scheme, $1/2 \in S$.
\begin{enumerate}
\item We have $\ul{k}_1^M \wequi a_\Nis \tau_{\le 0} \Gm/2$, where $\Gm \in \PSh_\Sigma(\Cor^\fr(S))$ denotes the sheaf $\scr O^\times$ with its usual structure of transfers \cite[Example 2.4]{lecture-notes-mot-cohom}.
\item If $f: S' \to S$ is a morphism of Dedekind schemes then $f^* \ul{k}_1^M \stackrel{\mot}{\wequi} \ul{k}_1^M \in \PSh_\Sigma(\Cor^\fr(S'))$.
\item The canonical map $\Sigma^\infty_\fr \ul{k}_1^M \to \Sigma^{1,1} \ul{k}^M \in \SH(S)$ is an equivalence.
\end{enumerate}
\end{lemma}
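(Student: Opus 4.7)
The plan is to establish the three parts in the order (1), (2), (3), with each step relying on the concrete description from the previous one.

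For \emph{part (1)}: I would start from the defining cofiber sequence for $\ul{k}^M = (H\Z/2)/\tau$, smash with the sphere $S^{1,1}$, and apply $\Omega^\infty_\fr$. Using that for a motivic spectrum $E$ one has $\pi_i(\Omega^\infty_\fr E) \wequi a_\Nis \pi_{i,0}(E)$ as framed presheaves, the question reduces to computing $a_\Nis \pi_{i-1,-1}(\ul{k}^M)$. The long exact sequence for the $\tau$-cofiber sequence then translates this into mod-$2$ motivic cohomology, and the crucial inputs are Spitzweck's identification $\pi_{-1,-1}(H\Z) \iso \Gm$ over Dedekind bases together with vanishing of $\pi_{s,w}(H\Z/2)$ in adjacent bidegrees. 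Together these pin down $\pi_0(\ul{k}_1^M) \iso a_\Nis(\Gm/2)$ and $\pi_i(\ul{k}_1^M) = 0$ for $i \ge 1$, which is exactly $a_\Nis \tau_{\le 0}(\Gm/2)$ (the truncation accounts for the potential $\pi_1 = \Gm[2]$ appearing in the $\scr E_\infty$-cofiber of $\cdot 2\colon \Gm \to \Gm$).

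For \emph{parts (2) and (3)}: Part (2) is essentially formal given (1), since each ingredient in the description of $\ul{k}_1^M$ — the sheaf $\Gm$, the $/2$ quotient, Nisnevich sheafification, and the truncation $\tau_{\le 0}$ — is compatible with pullback along a morphism of Dedekind schemes, up to motivic equivalence. Part (3) asks that the counit $\Sigma^\infty_\fr \ul{k}_1^M \to \Sigma^{1,1} \ul{k}^M$ of the framed adjunction is an equivalence; by the reconstruction theorem for framed motivic spectra (\cite[Theorem 18]{hoyois2018localization} and its follow-ups), this holds whenever the target is very effective and suitably connective. Both properties are inherited from $H\Z/2$ through the defining cofiber sequence: the $(1,1)$-shift promotes us into the very effective range, and the remaining connectivity estimate is routine.

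The \emph{main obstacle} is part (1): carefully controlling the mod-$2$ motivic cohomology of smooth schemes over a Dedekind base in all of the bidegrees that enter the long exact sequence, and correctly tracing the $\tau$-action through that sequence. Spitzweck's results on motivic cohomology over Dedekind rings are the essential inputs here; once (1) is in place, parts (2) and (3) follow with little additional effort.
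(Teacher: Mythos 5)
Your route through part (1) is a legitimate, more computational alternative to the paper's one-line appeal to the construction of $\ul{k}^M$ in \cite{bachmann-etaZ}: running the long exact sequence of the $\tau$-cofiber sequence in bidegrees $(\ast,-1)$, using $\pi_{-1,-1}(H\Z)\iso\Gm$ and the vanishing of the neighbouring groups Nisnevich-locally, does pin down $a_\Nis\pi_0\ul{k}_1^M\iso a_\Nis(\Gm/2)$ and shows that the $\mu_2=\Gm[2]$ in degree one is exactly what multiplication by $\tau$ kills. This is fine, modulo a word on why the resulting identification respects the framed transfers, which is part of the statement.

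The genuine gap is in part (3). The assertion that $\Sigma^\infty_\fr\Omega^\infty_\fr E\to E$ is an equivalence whenever $E$ is very effective is the motivic recognition principle, and the available references (e.g.\ \cite[Theorem 3.5.14(i)]{EHKSY}) prove it only over a \emph{perfect field}; it is not known over a Dedekind base, and the failure of such statements over general bases is precisely the difficulty this paper is addressing. So part (3) does not "follow with little additional effort" from (1): the paper's proof first reduces to residue fields, using \cite[Proposition B.3]{bachmann-norms} (equivalences over a Dedekind scheme are detected on residue fields) together with part (2) and the base-change invariance of $\ul{k}^M$ (\cite[Theorem 4.4]{bachmann-etaZ}) to identify both sides after pullback; only then is $\Sigma^\infty_\fr\Omega^\infty_\fr\wequi\tilde f_0$ applicable, and the very effectivity of $\Sigma^{1,1}\ul{k}^M$ (from the cofiber sequence with $H\Z/2$) finishes the argument. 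This is also why part (2) is not a throwaway remark but the essential input to (3). Finally, within (2) itself the one non-formal point is exactly the one you assert without proof: compatibility of the truncation $\tau_{\le 0}$ (and the sheafification) with $f^*$. The paper sidesteps this by writing $\ul{k}_1^M$ as the cofiber of $\Sigma\mu_2\to a_\Nis\Gm/2$, i.e.\ as a cofiber built from the representable sheaves $\Gm$ and $\mu_2$, which manifestly commute with pullback; you should argue the same way rather than claiming compatibility of the truncation directly.
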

For this and some of the following arguments, it will be helpful to recall that we have an embedding of $\Spc^\fr(S)^\gp$ into the stable category of spectral presheaves on $\Cor^\fr(S)$.
In particular, many fiber sequences in $\Spc^\fr(S)$ are cofiber sequences.
\begin{proof}
(1) Clear by construction since $H^1_\et(X, \mu_2) \wequi \scr O^\times(X)/2$ for $X$ affine.

(2) By (1) we have a cofiber sequence $\Sigma \mu_2 \to a_\Nis \Gm/2 \to \ul{k}_1^M \in \PSh_\Sigma(\Cor^\fr(S))$.
Since pullback of framed presheaves preserves cofiber sequences and commutes with forgetting transfers up to motivic equivalence \cite[Lemma 16]{hoyois2018localization} we reduce to the same assertion about $\Gm, \mu_2$, viewed as presheaves without transfers.
Since they are representable, the assertion is clear.

(3) Using \cite[Proposition B.3]{bachmann-norms}, (2) and \cite[Theorem 4.4]{bachmann-etaZ} we may assume that $S$ is the spectrum of a perfect field.
In this case $\Sigma^\infty_\fr \Omega^\infty_\fr \wequi \tilde f_0$ \cite[Theorem 3.5.14(i)]{EHKSY}, so we need only prove that $\Sigma^{1,1} \ul{k}_1^M$ is very effective.
But this is clear since we have the cofiber sequence $\Sigma^{1,0} H\Z/2 \xrightarrow{\tau} \Sigma^{1,1} H\Z/2 \to \Sigma^{1,1} \ul{k}_1^M$ and $H\Z/2$ is very effective.
\end{proof}

\begin{construction}
The assignment $V \mapsto (V \oplus V^*, \varphi_V)$ sending a vector bundle to its associated (hyperbolic) symmetric bilinear bundle upgrades to a morphism \[ \Vect \to \Bil \in \PSh_\Sigma(\Cor^\fr(S))^{BC_2}, \] where $\Vect$ carries the $C_2$-action coming from passing to dual bundles, and $\Bil$ carries the trivial $C_2$-action.
\end{construction}
\begin{proof}
Since the presheaves are $1$-truncated, all the required coherence data can be written down by hand.\NB{Better argument?}
\end{proof}

\begin{lemma}
Let $S$ be a Dedekind scheme containing $1/2$.
\begin{enumerate}
\item The map \[ (\Vect^\gp)_{hC_2} \to \Bil^\gp \] induces an isomorphism on $a_\Nis \pi_i$ for $i = 1,2$.
\item The homotopy orbits spectral sequence yields \[ a_\Nis \pi_0 (\Vect^\gp)_{hC_2} \wequi \Z, \] an exact sequence \[ 0 \to \ul{k}_1^M \to a_\Nis \pi_1 (\Vect^\gp)_{hC_2} \to \Z/2 \to 0 \] and a map \[ a_\Nis \pi_2 (\Vect^\gp)_{hC_2} \to \Z/2, \] all as presheaves with framed transfers.
\end{enumerate}
\end{lemma}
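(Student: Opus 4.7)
My plan is to prove (2) directly from the $C_2$-homotopy-orbits spectral sequence, and to deduce (1) via a hermitian $K$-theoretic hyperbolic/forgetful fiber sequence. The relevant spectral sequence is
\[ E^2_{p,q} = H_p(C_2;\, a_\Nis \pi_q \Vect^\gp) \Rightarrow a_\Nis \pi_{p+q}(\Vect^\gp)_{hC_2}, \]
which makes sense because $\Spc^\fr(S)^\gp$ embeds into the stable category of spectral presheaves on $\Cor^\fr(S)$, as recalled in the excerpt. The inputs are $a_\Nis \pi_0 \Vect^\gp \wequi \Z$ (rank map) and $a_\Nis \pi_1 \Vect^\gp \wequi \Gm$ (determinant), obtained from $K_0(R) = \Z$ and $K_1(R) = R^\times$ on a Henselian local ring $R$; the $C_2$-action by duality is trivial on ranks and is inversion on $\Gm$ (since $V^*$ has the same rank as $V$ and the determinant of $(f^T)^{-1}$ is $\det(f)^{-1}$).

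A standard cyclic-group computation then yields $H_*(C_2;\, \Z^{\mathrm{triv}}) = (\Z, \Z/2, 0, \Z/2, \dots)$ and $H_*(C_2;\, \Gm^-) = (\Gm/2, \mu_2, \Gm/2, \mu_2, \dots)$, with $\mu_2 \wequi \Z/2$ because $1/2 \in S$. Reading off the spectral sequence: only $E^2_{0,0} = \Z$ contributes to $\pi_0$, giving the first assertion. For $\pi_1$, the entries $E^2_{0,1} = \Gm/2$ and $E^2_{1,0} = \Z/2$ both survive to $E^\infty$ because the only differential that could affect them is $d_2 \colon E^2_{2,0} \to E^2_{0,1}$, whose source $H_2(C_2; \Z) = 0$ vanishes; the resulting filtration yields the advertised short exact sequence, and the identification $\Gm/2 \wequi \ul{k}_1^M$ comes from Lemma \ref{lemm:k1M}(1). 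For $\pi_2$, the entry $E^2_{1,1} = \mu_2 = \Z/2$ provides the top associated-graded quotient, and projection of $a_\Nis \pi_2 (\Vect^\gp)_{hC_2}$ onto this subquotient of $\Z/2$ supplies the advertised map.

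For (1), the plan is to invoke a hyperbolic/forgetful fiber sequence
\[ (\Vect^\gp)_{hC_2} \to \Bil^\gp \to L \]
coming from hermitian $K$-theory (available since $1/2 \in S$), where $L$ is an appropriate connective symmetric $L$-theory presheaf. Nisnevich-locally on a local ring with $1/2$ the groups $\pi_i L$ vanish for $i = 1, 2, 3$, and the long exact sequence then produces the desired isomorphism on $a_\Nis \pi_1$ and $a_\Nis \pi_2$. To port this into the framed presheaf setting, one reduces to Dedekind local bases via a continuity/base-change argument analogous to the proof of Lemma \ref{lemm:k1M}(2), and invokes the classical statement there.

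The principal obstacle is the construction of this fiber sequence functorially in $\PSh_\Sigma(\Cor^\fr(S))$ over a Dedekind base; (2), by contrast, is essentially a bookkeeping exercise once the low-degree Nisnevich-sheafified homotopy of $\Vect^\gp$ is identified.
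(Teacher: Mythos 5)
Your proposal follows essentially the same route as the paper for both parts: part (2) is the $C_2$-homotopy-orbits spectral sequence with the same inputs $a_\Nis\pi_0 = \Z$ (trivial action), $a_\Nis\pi_1 = \Gm$ (inversion), and the same group-homology computations; part (1) is the hyperbolic/forgetful cofiber sequence $K_{hC_2}\to\mathrm{GW}\to L$ from hermitian $K$-theory, using the Nisnevich-local vanishing of low-degree $L$-groups. Two small points of comparison: the paper notes additionally (via a check over a field) that the map $a_\Nis\pi_2(\Vect^\gp)_{hC_2}\to A$ has $A=\mu_2$ rather than a proper quotient, which the bare statement of the lemma does not require but is used downstream; and your worry about realizing the $L$-theory fiber sequence in $\PSh_\Sigma(\Cor^\fr(S))$ is somewhat misplaced, since part (1) only concerns isomorphisms of Nisnevich homotopy sheaves — the framed-transfer compatibility the lemma asserts is needed only in part (2), where the paper obtains it because the homotopy-orbit spectral sequence arises from the Postnikov filtration of $\Vect^\gp$ inside framed presheaves.
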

\begin{proof}
(1) This follows from the cofiber sequence $K_{hC_2} \to \mathrm{GW} \to L$ \cite[Theorem 7.6]{schlichting2016hermitian} using that $a_\Nis \pi_i L = 0$ unless $i \equiv 0 \pmod{4}$.

(2) The homotopy orbit spectral sequence just arises from the Postnikov filtration of $\Vect^\gp$ and the formation of homotopy orbits and hence is compatible with transfers.
Its $E_2$ page takes the form \[ H_i(C_2, a_\Nis \pi_j \Vect^\gp) \Rightarrow a_\Nis \pi_{i+j} (\Vect^\gp)_{hC_2}. \]
The form of the differentials of the spectral sequence implies that the terms $H_i(C_2, a_\Nis \pi_j \Vect^\gp)$ are permanent cycles for $i \le 1$, and survive to $E_\infty$ for $(i,j) = (0,0)$ and $(i,j) = (1,1)$.
One has $a_\Nis \pi_0 \Vect^\gp = \Z$ with the trivial action and $a_\Nis \pi_1 \Vect^\gp = \Gm$ \cite[Lemma III.1.4]{weibel-k-book} with the inversion action.
This already yields the first assertion.
A straightforward computation shows that \[ H_*(C_2, \Z) = \Z, \Z/2, 0, \Z/2, \dots \] and \[ H_*(C_2, \Gm) = \ul{k}_1^M, \mu_2, \ul{k}_1^M, \dots. \]
Since $H_2(C_2, \Z) = 0$, no differential can hit the $(i,j) = (0,1)$ spot either, yielding the second assertion.
Moreover this implies that $H_1(C_2, \Gm) = \mu_2$ is the bottom of the filtration of $\pi_2$.
It follows that there is a map $a_\Nis \pi_2 (\Vect^\gp)_{hC_2} \to A$, where $A$ is a quotient of $\mu_2$.
To prove that $A=\mu_2$ it suffices to check this on sections over a field\NB{really?}, in which case we can use the hermitian motivic spectral sequence of \cite{bachmann-very-effective}.\NB{Better argument?}
\end{proof}
We have $a_\Nis \pi_0 \Bil^\gp \wequi \ul{GW}$.
Thus we can form the following filtration of $\Bil^\gp$ refining the Postnikov filtration \[ \Bil^\gp \leftarrow F_1 \Bil^\gp \leftarrow F_2 \Bil^\gp\leftarrow F_3 \Bil^\gp\leftarrow F_4 \Bil^\gp \in \PSh_\Sigma(\Cor^\fr(S)) \] with subquotients given Nisnevich-locally by \begin{equation} \label{eq:subquotients} \ul{GW}, \Sigma \Z/2, \Sigma \ul{k}_1^M, \Sigma^2 \Z/2. \end{equation}

Recall also the framed presheaf $\Alt \in \PSh_\Sigma(\Cor^\fr(S))$ sending a scheme to the groupoid of vector bundles with a non-degenerate alternating form.
Tensoring with the canonical alternating (virtual) form $H(1) - h$ on $H\P^1$ (where $H(1)$ is the tautological rank $2$ alternating form on $H\P^1$, and $h$ is the standard alternating form on a trivial vector bundle of rank $2$) yields maps \[ \sigma_1: H\P^1 \wedge \Alt^\gp \to \Bil^\gp \quad\text{and}\quad \sigma_2: H\P^1 \wedge \Bil^\gp \to \Alt^\gp; \] by construction we have $\tilde \beta = \sigma_1 \sigma_2$ (recall that $H\P^1 \stackrel{\mot}{\wequi} S^{4,2}$).

\begin{lemma}
Let $S$ be a Dedekind scheme, $1/2 \in S$.
\begin{enumerate}
\item The composite \[ H\P^1 \wedge \Alt^\gp \xrightarrow{\sigma_1} \Bil^\gp \to \Bil^\gp/F_4\Bil^\gp \] is motivically null.
  The induced map \[ \Sigma^\infty_\fr \cof(\sigma_1) \to \Sigma^\infty_\fr \Bil^\gp/F_4\Bil^\gp \] is an equivalence.
\item The composite \[ H\P^1 \wedge \Bil^\gp \xrightarrow{\sigma_2} \Alt^\gp \xrightarrow{rk/2} \Z \] is motivically null.
  The induced map \[ \Sigma^\infty_\fr \cof(\sigma_2) \to \Sigma^\infty_\fr \Z \] is an equivalence.
\end{enumerate}
\end{lemma}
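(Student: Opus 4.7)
The plan is to reformulate each part as a single ``factor-through-plus-equivalence'' assertion, dispose of nullity by a short connectivity argument, and reduce the remaining equivalences to the case of a perfect field via base change. For each part, the two displayed assertions together are equivalent, by a standard comparison of cofibers in the stable framed category, to saying that $\sigma_i$ lifts motivically through the appropriate subobject --- $F_4 \Bil^\gp$ in case (1), and $\mathrm{fib}(rk/2 \colon \Alt^\gp \to \Z)$ in case (2) --- with the lift being a $\Sigma^\infty_\fr$-equivalence.

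For nullity I argue by connectivity in the stable framed motivic category. In (2), the target $\Z$ is motivically $0$-truncated while $H\P^1 \wedge \Bil^\gp$ is $\geq 2$-connective simplicially (since $H\P^1 \stackrel{\mot}{\wequi} S^{4,2}$), so the composite is null; equivalently, the factorization through $\mathrm{fib}(rk/2)$ is built into the construction because $H(1) - h$ has rank $0$. In (1), the target $\Bil^\gp/F_4 \Bil^\gp$ is $2$-truncated with $\pi_2 = \Z/2$, as read off from the stated subquotients $\ul{GW}, \Sigma \Z/2, \Sigma \ul{k}_1^M, \Sigma^2 \Z/2$, while the source is $\geq 2$-connective; so the composite is determined by its action on $\pi_2$. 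I would verify the vanishing by the reduction to a perfect field described below, where the corresponding class in $\pi_2 \tilde f_0 \KO \to \Z/2$ is visibly zero.

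For the equivalences, reduce to $S = \Spec k$ for $k$ a perfect field using \cite[Proposition B.3]{bachmann-norms}, together with base change for $\Bil^\gp, \Alt^\gp, \Vect^\gp$ (as in the proof of Lemma \ref{lemm:k1M}(3), using \cite[proof of Lemma 7.5]{hoyois2021hermitian}) and for $\ul{k}_1^M$ (Lemma \ref{lemm:k1M}(2)); the filtration $F_\bullet \Bil^\gp$ is assembled from these and so respects pullback up to motivic equivalence. Over a perfect field, $\Sigma^\infty_\fr \Bil \wequi \tilde f_0 \KO$ is known from \cite{hoyois2021hermitian,hoyois2020hilbert}, so $F_\bullet \Bil^\gp$ matches a refinement of the Postnikov/slice filtration of $\tilde f_0 \KO$, and $\sigma_1, \sigma_2$ correspond to the Bott-type periodicity maps whose composite is $\tilde\beta$. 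The claimed equivalences then become statements about the low slices of $\tilde f_0 \KO$ that follow from its well-known structure.

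The main obstacle is this final step: rigorously identifying $F_\bullet \Bil^\gp$ with the appropriate filtration on $\tilde f_0 \KO$ over a perfect field, and checking that $\sigma_1, \sigma_2$ implement the Bott periodicity maps in this identification. Once that matching is set up, both the equivalences and the $\pi_2$-nullity in (1) follow from known low-slice calculations for $\KO$, but the book-keeping --- especially keeping track of the two pieces of $\pi_1 \Bil^\gp$ and ensuring the extensions are handled consistently with the $C_2$-homotopy orbit description --- is where the real work lies.
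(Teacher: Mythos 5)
Your overall architecture (interpret the quotient via its filtration, test nullity against the subquotients, then base-change to a perfect field for the equivalence) matches the paper's, but the nullity argument in (1) has a real gap. Comparing the simplicial connectivity of $H\P^1 \wedge \Alt^\gp \stackrel{\mot}{\wequi} S^2 \wedge \Gm^{\wedge 2}\wedge \Alt^\gp$ with the $2$-truncatedness of the target only kills the contributions below degree $2$; it leaves a possibly nonzero map on $\pi_2$, i.e.\ a class in $[\Sigma^{4,2}\Alt^\gp, L_\Nis K(\Z/2,2)]$. You propose to kill this class by reducing to a perfect field, but the reduction to residue fields (via \cite[Proposition B.3]{bachmann-norms}) is a conservativity statement: it detects equivalences (vanishing of cofibers), not nullhomotopies, since nullhomotopies over the various fibers need not glue; and ``visibly zero'' is not substantiated. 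The paper's argument is instead a weight argument that makes the $\pi_2$ issue disappear: the four subquotients are (motivically) the infinite loop spaces of $H\tilde\Z$, $\Sigma\ul{k}^M$, $\Sigma^{2,1}\ul{k}^M$, $\Sigma^2\ul{k}^M$, and $\Omega^{4,2}$ of each of these vanishes, so there are \emph{no} nonzero maps out of $\Sigma^{4,2}\Alt^\gp$ into any subquotient. Your argument for the nullity in (2) (the virtual form $H(1)-h$ has rank $0$, plus $\Z$ being motivically local and truncated) is fine and is essentially the paper's warm-up argument for $\KGL$.

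Two further points. First, you need to justify why the quotient $\Bil^\gp/F_4\Bil^\gp$ (a colimit of framed presheaves) is motivically local and why the cofiber sequences defining it are also fiber sequences; the paper does this by passing to spectral presheaves and observing that each subquotient is the infinite loop space of a motivic spectrum. Without this, ``motivically null'' cannot be tested subquotient by subquotient, and your ``standard comparison of cofibers'' relating the two displayed assertions also relies on it. Second, the endgame over a perfect field --- identifying $F_\bullet\Bil^\gp$ with a refinement of the Postnikov/slice filtration of $\tilde f_0\KO$ and checking that $\sigma_1,\sigma_2$ induce the expected maps --- is exactly what you flag as ``the real work,'' and you leave it undone. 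The paper disposes of it by invoking the hermitian motivic filtration of \cite{bachmann-very-effective}; that is the specific input required, rather than an appeal to the ``well-known structure'' of $\tilde f_0 \KO$.
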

\begin{proof}
(1) Write $C$ for the cofiber \emph{computed in the category of spectral presheaves on $\Cor^\fr(S)$}.
Then $C$ admits a finite filtration, with subquotients corresponding to those in \eqref{eq:subquotients}.
Since each of those is the infinite loop space of a motivic spectrum, it follows that $C$ is in fact motivically local.
Consequently $C$ corresponds to $\Bil^\gp/F_4\Bil^\gp$ under the embedding into spectral presheaves.
These contortions tell us that there are \emph{fiber} sequences \[ F_{i+1}\Bil^\gp/F_4\Bil^\gp \to F_i\Bil^\gp/F_4\Bil^\gp \to F_i\Bil^\gp/F_{i+1}\Bil^\gp \] for $i < 4$.
Hence to prove that the composite is null, it suffices to prove that there are no maps from $\Sigma^{4,2} \Alt^\gp$ into the motivic localizations of the subquotients of the filtration given in \eqref{eq:subquotients}.
These motivic localizations are $\ul{GW}, L_\Nis K(\Z/2,1), L_\Nis K(k_1^M,1)$ and $L_\Nis K(\Z/2,2)$ (since they are motivically equivalent to the subquotients, and motivically local because they are infinite loop spaces of the motivic spectra $H\tilde \Z$\NB{Use defining fiber square.}, $\Sigma \ul{k}^M$, $\Sigma^{2,1} \ul{k}^M, \Sigma^2 \ul{k}^M$).
It suffices to prove that $\Omega^{4,2}$ of these subquotients vanishes, which is clear.
Next we claim that $\Sigma^\infty_\fr \Bil^\gp/F_4\Bil^\gp$ is stable under base change (among Dedekind schemes containing $1/2$).
Indeed the defining fiber sequences of $F_4\Bil^\gp$ are also cofiber sequences, and so $\Sigma^\infty_\fr \Bil^\gp/F_4\Bil^\gp$ is obtained by iterated extension from spectra stable under base change (see Lemma \ref{lemm:k1M}(2) for $\ul{k}_1^M$, \cite[proof of Lemma 7.5]{hoyois2021hermitian} for $\Bil$ and $\Alt$, and \cite[Lemma 16]{hoyois2018localization} for $\Z/2$).
To prove that the induced map is an equivalence we thus reduce as before to $S = \Spec(k)$, $k$ a perfect field of characteristic $\ne 2$.
In this case the result is a straightforward consequence of the hermitian motivic filtration of \cite{bachmann-very-effective}.

(2) The proof is essentially the same as for (1), but easier.
\end{proof}

We now arrive at the main result.
\begin{theorem}
Let $S$ be a scheme containing $1/2$ such that \[ f_1(H\Z) = 0 = H\W_{\ge 2} \in \SH(S). \]
The canonical maps \[ \Sigma^\infty_\fr \Bil \to \tilde f_0 \KO \quad\text{and}\quad \Sigma^\infty_\fr \Alt \to \tilde f_0 \Sigma^{4,2} \KO \] are equivalences.
\end{theorem}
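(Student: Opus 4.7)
The plan is to mirror the proof of Theorem~\ref{thm:main}(1), with the single Bott element $\beta$ replaced by the factorization $\tilde\beta = \sigma_1 \circ \Sigma^{4,2}\sigma_2$ from the preceding lemma, and to handle $\Bil$ and $\Alt$ in parallel. As in part (1), an open-cover argument reduces us to the case where $S$ is qcqs (affine), so that $\tilde f_0$ commutes with filtered colimits via \cite[Proposition A.3(2)]{bachmann-norms}. Since $\Sigma^\infty_\fr \Bil$ is very effective (being a framed suspension spectrum), we have $\tilde f_0 \Sigma^\infty_\fr \Bil \wequi \Sigma^\infty_\fr \Bil$, and the colimit presentation $\KO \wequi \colim_n \Sigma^{-8n,-4n}\Sigma^\infty_\fr \Bil$ obtained by iterating $\tilde\beta$ reduces the theorem to showing that each transition map becomes an equivalence after $\tilde f_0$. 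Examining the first transition translates this to the single vanishing $\tilde f_4 C = 0$, where $C := \Sigma^\infty_\fr \Bil / \tilde\beta$; higher transitions follow automatically since very $4(n+1)$-effective implies very $4$-effective for all $n \ge 0$.

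To compute $C$, I apply the octahedral axiom to $\tilde\beta = \sigma_1 \circ \Sigma^{4,2}\sigma_2$ to obtain a cofiber sequence
\[ \Sigma^{4,2}\Sigma^\infty_\fr \cof(\sigma_2) \to C \to \Sigma^\infty_\fr \cof(\sigma_1). \]
The preceding lemma identifies $\Sigma^\infty_\fr \cof(\sigma_2) \wequi H\Z$ and $\Sigma^\infty_\fr \cof(\sigma_1) \wequi \Sigma^\infty_\fr(\Bil^\gp/F_4\Bil^\gp)$, the latter carrying a four-step filtration whose $\Sigma^\infty_\fr$-subquotients are (via Lemma~\ref{lemm:k1M}(3) for the Milnor $K$-theory piece)
\[ \Sigma^\infty_\fr\ul{GW}, \quad \Sigma^{1,0}H\Z/2, \quad \Sigma^{2,1}\ul{k}^M, \quad \Sigma^{2,0}H\Z/2. \]
So it suffices to verify $\tilde f_4 = 0$ on $\Sigma^{4,2}H\Z$ and on each of these four subquotients.

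The $H\Z$-, $H\Z/2$-, and $\ul{k}^M$-type pieces are handled by standard slice-filtration manipulations: the hypothesis $f_1 H\Z = 0$ propagates through the defining cofiber sequences to give $\tilde f_n H\Z = \tilde f_n H\Z/2 = 0$ for $n \ge 1$ and $\tilde f_n \ul{k}^M = 0$ for $n \ge 2$, from which the required vanishings on the bi-grading shifted pieces follow. The essential remaining step is $\tilde f_4 \Sigma^\infty_\fr\ul{GW} = 0$: identifying $\Sigma^\infty_\fr\ul{GW}$ with the Milnor--Witt motivic cohomology spectrum $H\tilde\Z$ and using its fracture description in terms of $H\Z$, $H\W$, and $H\Z/2$ from \cite{bachmann-etaZ}, the hypothesis $H\W_{\ge 2} = 0$ together with $f_1 H\Z = 0$ forces $\tilde f_4 H\tilde\Z = 0$. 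The statement for $\Alt$ is obtained by running the symmetric argument on $\Sigma^\infty_\fr \Alt$ with $\sigma_1, \sigma_2$ interchanged, writing $\tilde\beta = \sigma_2 \circ \Sigma^{4,2}\sigma_1$; the same building blocks and vanishings apply. The main obstacle is the analysis of $\tilde f_4 \Sigma^\infty_\fr\ul{GW}$, where the assumption $H\W_{\ge 2} = 0$ is genuinely needed via the Witt-theoretic factor of $H\tilde\Z$; all other vanishings are formal consequences of $f_1 H\Z = 0$.
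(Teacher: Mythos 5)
Your proposal follows essentially the same route as the paper: reduce to qcqs $S$, present $\KO$ as a colimit along the Bott maps, and kill the cofiber of each transition map by filtering it with the subquotients $H\tilde\Z$, $H\Z/2$, $\ul{k}^M$, $H\Z$ (suitably shifted) and checking that each lies in $\SH(S)^{\veff\perp}$. The only packaging difference is that you compose $\sigma_1$ and $\sigma_2$ into $\tilde\beta$ and invoke the octahedral axiom, then rerun the symmetric argument for $\Alt$, whereas the paper works with the alternating colimit $\Sigma^\infty_\fr\Bil \to \Sigma^{-4,-2}\Sigma^\infty_\fr\Alt \to \cdots$ and shows that each of $\sigma_1,\sigma_2$ separately induces an equivalence on $\tilde f_0$, which yields both statements simultaneously; your reduction to the single vanishing $\tilde f_4(\Sigma^\infty_\fr\Bil/\tilde\beta)=0$, with higher transitions following from $\tilde f_{4(n+1)}=\tilde f_{4(n+1)}\tilde f_4$, is correct. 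Two points are elided that you should make explicit. First, the identification of $\cof(\sigma_1)$ and $\cof(\sigma_2)$ in the preceding lemma is only established over Dedekind schemes; to use it over a general $S$ satisfying the hypotheses you must observe, as the paper does, that the identified subquotient spectra are stable under arbitrary base change, so the finite filtrations of the cofibers pull back from $\Z[1/2]$. Second, the assertion that $f_1H\Z=0$ and $H\W_{\ge 2}=0$ ``force'' $\tilde f_4 H\tilde\Z=0$ is precisely the one nontrivial computation and is stated rather than proved: the paper's argument passes from $H\tilde\Z$ to $\ul{K}^W$ via the defining fiber square, uses $\ul{K}^W/\eta\simeq \ul{k}^M$ to see that multiplication by $\eta$ is an equivalence on $\Omega^\infty$ in the relevant range, and thereby reduces to $\tilde f_0(\Sigma^{-2}H\W)\simeq \tilde f_0(\Sigma^{-2}(H\W_{\ge 2}))=0$. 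With those two steps filled in, your argument is complete and agrees with the paper's.
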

\begin{proof}
As before we may assume that $S$ is qcqs.

We know that $\KO$ is the colimit of \[ \Sigma^\infty_\fr \Bil \xrightarrow{\sigma_2} \Sigma^{-4,-2} \Sigma^\infty_\fr \Alt \xrightarrow{\sigma_1} \Sigma^{-8,-4} \Bil \xrightarrow{\sigma_2} \cdots. \]
It is hence enough to prove that $\sigma_1: \Sigma^{-8n,-4n} \Sigma^\infty_\fr \Bil \to \Sigma^{-8n-4,-4n-2} \Sigma^\infty_\fr \Alt$ induces an equivalence on $\tilde f_0$ for every $n \ge 0$, and similarly for $\sigma_2$.
(Here we use that $S$ is qcqs, so that $\tilde f_0$ preserves filtered colimits.\NB{ref?})
Given a cofiber sequence $A \to B \to C$, in order to prove that $\tilde f_0 A \wequi \tilde f_0 B$, it suffices to show that $\Map(X, C) = *$ for every $X \in \SH(S)^\veff$, i.e. that $C \in \SH(S)^{\veff\perp}$.

Over $\Z[1/2]$, the cofiber of $\sigma_1$ has a finite filtration, with subquotients \[ \Sigma^{-4,-2} \Sigma^\infty_\fr \ul{GW}, \Sigma^{-3,-2} \Sigma^\infty_\fr \Z/2, \Sigma^{-3,-2} \Sigma^\infty_\fr \ul{k}_1^M, \Sigma^{-2,-2} \Sigma^\infty_\fr \Z/2, \] and the cofiber of $\sigma_2$ is $\Sigma^{-4,-2} \Sigma^\infty_\fr \Z$.
Using \cite[Corollary 22]{hoyois2018localization}, \cite[Theorem 7.3]{hoyois2021hermitian} and Lemma \ref{lemm:k1M}(3), we can identify the list of cofibers as \[ \Sigma^{-4,-2} H\tilde\Z, \Sigma^{-3,-2} H\Z/2, \Sigma^{-2,-1} \ul{k}^M, \Sigma^{-2,-2} H\Z/2, \Sigma^{-4,-2} H\Z. \]
These spectra are stable under arbitrary base change (essentially by definition), and hence for arbitrary $S$ the cofibers of $\sigma_1, \sigma_2$ are obtained as finite extensions, with cofibers in the above list.
To conclude the proof, it will thus suffice to show that all spectra in the above list are in $\SH(S)^{\veff\perp}$.

Note that if $E \in \SH(S)$ then $E \in \SH(S)^{\veff\perp}$ if and only if $\Omega^\infty E \wequi *$.
In particular this holds if $f_0 E = 0$.
This holds for $\Sigma^{m,n} H\Z$ as soon as $n<0$, by assumption.
Hence it also holds for $\Sigma^{m,n}H\Z/2$ in the same case ($f_0$ being a stable functor) and for \[ \Sigma^{m,n} \ul{k}^M \wequi \cof(\Sigma^{m,n-1} H\Z/2 \xrightarrow{\tau} \Sigma^{m,n} H\Z/2). \]
The only spectrum left in our list is $\Sigma^{-4,-2}H\tilde\Z$.
Using \cite[Definition 4.1]{bachmann-etaZ} we see now that $\Omega^\infty \Sigma^{-4,-2} H\tilde\Z \wequi \Omega^\infty \Sigma^{-4,-2} \ul{K}^W$, so we may treat the latter spectrum.
We have $\ul{K}^W/\eta \wequi \ul{k}^M$ \cite[Lemma 3.9]{bachmann-etaZ}, whence $\eta: \Sigma^{-4-n,-2-n} \ul{K}^W \to \Sigma^{-5-n,-3-n} \ul{K}^W$ induces an equivalence on $\Omega^\infty$.
Since $\Omega^\infty$ commutes with filtered colimits, we see that $\Sigma^{-4,-2}\ul{K}^W \in \SH(S)^{\veff\perp}$ if and only if $\Sigma^{-4,-2}\ul{K}^W[\eta^{-1}] \in \SH(S)^{\veff\perp}$.
This latter spectrum is the same as $\Sigma^{-2} H\W$ \cite[Lemma 3.9]{bachmann-etaZ}, and \[ \tilde f_0(\Sigma^{-2} H\W) \wequi \tilde f_0((\Sigma^{-2} H\W)_{\ge 0}) \wequi \tilde f_0(\Sigma^{-2} (H\W_{\ge 2})) = 0 \] by assumption.
\end{proof}

\bibliographystyle{plainc}
\bibliography{bibliography}
\end{document}